\newtheorem{theorem}{Theorem}[section]
\newtheorem{lemma}[theorem]{Lemma}
\theoremstyle{definition}
\theoremstyle{remark}
\numberwithin{equation}{section}
\begin{document}
\setcounter{page}{1}

\title[$\lambda $-Bernstein operators]{Some general statistical approximation results for $ \lambda $-Bernstein operators}
\author[F. \"{O}zger]{Faruk \"{O}zger}
\maketitle
\begin{center}
  \address{Department of Engineering Sciences, \.{I}zmir Katip \c{C}elebi University, 35620, \.{I}zmir, Turkey}\newline

\email{farukozger@gmail.com}
\end{center}

\begin{abstract}
In this article, we achieve some general statistical approximation results for $ \lambda $-Bernstein operators in addition to some other approximation properties. We prove a statistical Voronovskaja-type approximation theorem. We also construct bivariate $\lambda $-Bernstein operators and study their approximation properties.\\
\textbf{Keywords:} Rate of weighted $ A $ statistical convergence, $\lambda $-Bernstein operators, bivariate $\lambda $-Bernstein operators, statistical approximation properties, Gr\"{u}ss–Voronovskaja-type theorem,  weighted $A$-statistical Voronovskaja-type theorem, weighted space\\
\textbf{MSC:} 40A05, 41A25, 41A36
\end{abstract}

\section{Introduction}

Bernstein used famous polynomials nowadays called Bernstein polynomials, in 1912, to obtain an alternative proof of Weierstrass's fundamental theorem \cite{bernstein}.  Approximation properties of Bernstein operators and their applications in Computer Aided Geometric Design and Computer Graphics have been extensively studied in many articles.

Bernstein basis of degree $n$ on $x \in [0, 1]$ is defined by
$$b_{n,i}(x)=\binom{n}{i}x^i~(1-x)^{n-i}\qquad~\qquad~i=0,\dots, n,$$
and $n$th order Bernstein polynomial
is given by
\begin{eqnarray}\label{46}
B_n(f; x)=\sum_{i=0}^nf\left(\frac{i}{n}\right)~b_{n, i}(x)
\end{eqnarray}%
for any continuous function $f(x)$ defined on $[0, 1]$.

In 2018, Cai et al. have introduced a new type $ \lambda $ Bernstein operators \cite{cai} 
\begin{eqnarray}\label{caioperator}
B_{n,\lambda}(f; x)=\sum_{i=0}^nf\left(\frac{i}{n}\right)~\tilde{b}_{n, i}(\lambda;x)
\end{eqnarray}%
with B\'{e}zier bases $ \tilde{b}_{n, i}(\lambda;x) $ \cite{ye}:
\begin{align}
\left\{\begin{array}{ccl}
&&\tilde{b}_{n, 0}(\lambda; x)=b_{n, 0}(x)-\dfrac{\lambda}{n+1}b_{n+1, 1} (x),\\
&&\tilde{b}_{n, i}(\lambda; x)=b_{n, i}(x)+\lambda \left( \dfrac{n-2i+1}{n^2-1}b_{n+1, i}(x)-\dfrac{n-2i-1}{n^2-1}b_{n+1, i+1}(x)\right) ,~~i=1, 2 \dots, n-1,\\
&&\tilde{b}_{n ,n}(\lambda; x)=b_{n, n}(x)-\dfrac{\lambda}{n+1}b_{n+1, n} (x),\end{array} \right. \label{bezier}
\end{align}
where shape parameters $\lambda \in [-1,1]$.

\section{Preliminary Results}
In this part, we obtain global approximation formula in terms of Ditzian-Totik uniform modulus of smoothness of first and second order and give a local direct estimate of the rate of convergence by Lipschitz-type function involving two parameters for $ \lambda $ Bernstein operators. We also give a Gr\"{u}ss–-Voronovskaja and a quantitative Voronovskaja-type theorem

Results in the following lemma were obtained for $ \lambda $ Bernstein operators in \cite[Lemma 2.1]{cai}.
\begin{lemma} We have following equalities for $ \lambda $ Bernstein operators:
	\begin{align}\label{lemmamoment}
	B_{n,\lambda}(1; x)&=1;\\
	\nonumber B_{n,\lambda}(t; x)&=x+\frac{1-2x+x^{n+1}-(1-x)^{n+1}}{n(n-1)}\lambda;\\
	\nonumber	B_{n,\lambda}(t^2; x)&=x^2+\frac{x(1-x)}{n}+\left[ \frac{2x-4x^2+2x^{n+1}}{n(n-1)}+ \frac{x^{n+1}+(1-x)^{n+1}-1}{n^2(n-1)}  \right] \lambda;\\
	\nonumber	 B_{n,\lambda}(t^3; x)&=x^3+\frac{3x^2(1-x)}{n}+\frac{2x^3-3x^2+x}{n^2}+\left[ \frac{6x^{n+1}-6x^3}{n^2}+ \frac{3x^2-3x^{n+1}}{n(n-1)}\right. \\
	\nonumber	 &\quad+\frac{9x^{n+1}-9x^2}{n^2(n-1)}+ \frac{4x^{n+1}-4x}{n^3(n-1)}+\left. \frac{1-x^{n+1}+(1-x)^{n+1}}{n^3(n^2-1)}\right] \lambda;\\
	\nonumber	B_{n,\lambda}(t^4; x)&=x^4+\frac{6x^3(1-x)}{n}+\frac{7x^2-18x^3+11x^4}{n^2}+\frac{x-7x^2+12x^3-6x^4}{n^3}\\
	\nonumber	&\quad+\left[ \frac{6x^2-2x^3-8x^4+4x^{n+1}}{n^2}+ \frac{17x^{n+1}+16x^4-32x^3-x^2}{n^3}+\frac{x-x^{n+1}}{n^4}\right.\\
	\nonumber	&\quad+\left. \frac{7x^2-7x^{n+1}}{n^2(n-1)}+\frac{x-23x^2+22x^{n+1}}{n^3(n-1)}+\frac{(1-x)^{n+1}+x-1}{n^4(n-1)}\right] \lambda.
	\end{align}
\end{lemma}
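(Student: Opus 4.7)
The plan is to verify each of the five identities by a direct computation from the definition \eqref{caioperator} of $B_{n,\lambda}$ together with the explicit form \eqref{bezier} of the B\'{e}zier basis. Writing $\tilde{b}_{n,i}(\lambda;x) = b_{n,i}(x) + \lambda\,c_{n,i}(x)$, where $c_{n,i}$ collects the $b_{n+1,\cdot}$ correction terms, the operator splits as
\[
B_{n,\lambda}(t^k;x) \;=\; B_n(t^k;x) \;+\; \lambda \sum_{i=0}^n \left(\frac{i}{n}\right)^k c_{n,i}(x).
\]
The $\lambda$-free piece $B_n(t^k;x)$ is handled by the classical Bernstein moment identities (which may be quoted or derived by the standard generating-function argument), so all the real work is concentrated in the correction sum.

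My second step is to rewrite every correction term in the shifted basis $b_{n+1,i}(x)$, separating the endpoint contributions arising from $\tilde{b}_{n,0}$ and $\tilde{b}_{n,n}$. Each interior correction is a linear combination of two adjacent bases $b_{n+1,i}$ and $b_{n+1,i+1}$; reindexing the second half by $j=i+1$ lets me collect the coefficient of each $b_{n+1,i}$. After regrouping, the bulk sum carries the weight $(n-2i+1)/(n^2-1)$ multiplied by the discrete difference $(i/n)^k-((i-1)/n)^k$, together with boundary remainders at $i=1$ and $i=n$ that combine cleanly with the endpoint terms from $\tilde{b}_{n,0}$ and $\tilde{b}_{n,n}$. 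Expanding the discrete difference as a polynomial in $i$ of degree $k-1$, each monomial $i^m$ gets evaluated via $\sum_{i=0}^{n+1} i^m b_{n+1,i}(x)$, i.e., essentially $B_{n+1}(t^m;x)$ up to factorial factors; the boundary remainders produce the $x^{n+1}$ and $(1-x)^{n+1}$ terms visible in \eqref{lemmamoment}.

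For $k=0$ the telescoping is exact and the correction collapses to zero, giving $B_{n,\lambda}(1;x)=1$; for $k=1,2$ only a few monomial moments enter and the stated combinations of $x$, $x^{n+1}$, $(1-x)^{n+1}$ follow directly. The main obstacle is purely computational: for $k=3$ and $k=4$ the difference $i^k-(i-1)^k$ is a cubic polynomial in $i$, so several classical $B_{n+1}$-moments contribute, and sorting these contributions against the displayed denominators $n^j(n-1)$, $n^j(n^2-1)$, $n^3(n^2-1)$ while correctly tracking the coefficients of the boundary powers $x^{n+1}$ and $(1-x)^{n+1}$ requires careful bookkeeping. I would therefore work the cases $k=0,1,2$ in detail first to lock in the telescoping pattern, then carry through $k=3,4$ in parallel and cross-check against the formulas already recorded in \cite{cai}.
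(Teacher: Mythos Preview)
Your computational plan is correct and is exactly the standard route to these moment formulas: split $B_{n,\lambda}(t^k;x)=B_n(t^k;x)+\lambda\sum_i(i/n)^k c_{n,i}(x)$, reindex the correction so that each $b_{n+1,j}$ carries the telescoped weight $\tfrac{n-2j+1}{n^2-1}\bigl[(j/n)^k-((j-1)/n)^k\bigr]$ plus boundary remainders producing the $x^{n+1}$ and $(1-x)^{n+1}$ terms, and then feed in the classical $B_{n+1}$ moments. One minor slip: for $k=3$ the difference $i^3-(i-1)^3=3i^2-3i+1$ is quadratic in $i$, not cubic; only $k=4$ yields a cubic.

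There is nothing to compare your argument against, however: the paper supplies no proof of this lemma. The sentence immediately preceding it attributes the identities to \cite[Lemma~2.1]{cai}, and the text passes directly to the next subsection. Your proposal therefore reconstructs (and, for $t^3$ and $t^4$, extends) the computation that the citation stands in for.
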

\subsection{Global and local approximations}
First we obtain global approximation formula in terms of Ditzian-Totik uniform modulus of smoothness of first and second order defined by
\begin{eqnarray*}
	\omega_{\xi}(f, \delta):=\sup_{0<|h|\leq\delta} ~\sup_{x, x+ h\xi(x) \in [0,1]}\{|f(x+h\xi(x))-f(x)|\}
\end{eqnarray*}
and
\begin{eqnarray*}
	\omega_2^{\phi}(f, \delta):=\sup_{0<|h|\leq\delta} ~\sup_{x,x\pm h\phi(x)\in [0,1]}\{|f(x+h\phi(x))-2f(x)+f(x-h\phi(x))|\},
\end{eqnarray*}
respectively, where $\phi$ is an admissible step-weight function on $[a, b]$, i.e. $\phi(x)=[(x-a)(b-x)]^{1/2}$ if $x \in [a, b]$, \cite{zdi}. Corresponding $K$-functional is 
\begin{eqnarray*}
	K_{2, \phi(x)}(f,\delta)=\inf_{g \in W^2(\phi)}\big\{||f-g||_{C[0,1]}+\delta||\phi^2g''||_{C[0,1]}:g\in C^2[0 ,1]\big\},
\end{eqnarray*}
where $\delta>0$, $W^2(\phi)=\{g\in C[0,1]:g' \in AC[0,1],~ \phi^2g'' \in C[0,1]\}$ and $C^2[0 ,1]=\{g\in C[0,1]:g', g'' \in C[0,1]\}$. Here, $g' \in AC[0,1]$ means that $g'$
is absolutely continuous on $[0,1]$. It is known by \cite{DeVore-Lorentz} that there exists an absolute constant $C > 0$, such that
\begin{eqnarray}\label{3000}
C^{-1}\omega_2^{\phi}(f, \sqrt{\delta})\leq K_{2, \phi(x)}(f,\delta)\leq C \omega_2^{\phi}(f, \sqrt{\delta}).
\end{eqnarray}

\begin{theorem}\label{alphan} Let $ \lambda \in [-1,1], $ $f \in C[0, 1]$ and $\phi$~$(\phi\neq 0)$ be an admissible step-weight function of Ditzian-Totik modulus of smoothness such that $\phi^2$ is concave. Then we have
	\begin{align*}
	|B_{n,\lambda}(f; x)-f(x)|\leq C \omega_2^{\phi}\bigg(f, \frac{\delta_n(x)}{2\phi(x)}\bigg)+\omega_{\xi}\bigg(f, \frac{\beta_{n}(x)}{\xi(x)}\bigg)
	\end{align*}
for $x \in [0,1]$ and $C>0$. 
\end{theorem}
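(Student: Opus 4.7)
The plan is to follow the classical Ditzian--Totik scheme, adapted to the fact that $B_{n,\lambda}$ does not preserve linear functions. First I would introduce the auxiliary operator
\begin{equation*}
\widetilde{B}_{n,\lambda}(f;x) := B_{n,\lambda}(f;x) + f(x) - f\bigl(x+\beta_{n}(x)\bigr),
\end{equation*}
where $\beta_{n}(x) := B_{n,\lambda}(t;x)-x$ is read off from Lemma~2.1, so that $\widetilde{B}_{n,\lambda}$ reproduces constants and linear functions; equivalently $\widetilde{B}_{n,\lambda}((t-x);x)=0$, while $\widetilde{B}_{n,\lambda}((t-x)^{2};x) = B_{n,\lambda}((t-x)^{2};x)$. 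Call this last quantity $\delta_{n}^{2}(x)$; it can be computed in closed form from the second-moment identity in Lemma~2.1.

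Next, for an arbitrary $g\in W^{2}(\phi)$ I would apply Taylor's formula with integral remainder,
\begin{equation*}
g(t)=g(x)+(t-x)g'(x)+\int_{x}^{t}(t-u)\,g''(u)\,du,
\end{equation*}
and act by $\widetilde{B}_{n,\lambda}$ on both sides. The linear part is killed, so
\begin{equation*}
\widetilde{B}_{n,\lambda}(g;x)-g(x)=\widetilde{B}_{n,\lambda}\!\left(\int_{x}^{t}(t-u)g''(u)\,du;\,x\right).
\end{equation*}
The core analytic step, where the concavity hypothesis on $\phi^{2}$ is used, is the bound
\begin{equation*}
\left|\int_{x}^{t}\frac{(t-u)}{\phi^{2}(u)}\,du\right|\le \frac{(t-x)^{2}}{\phi^{2}(x)},
\end{equation*}
which yields
\begin{equation*}
|\widetilde{B}_{n,\lambda}(g;x)-g(x)|\le \frac{\delta_{n}^{2}(x)}{\phi^{2}(x)}\,\|\phi^{2}g''\|_{C[0,1]}.
\end{equation*}

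Then I would split the error as
\begin{equation*}
|B_{n,\lambda}(f;x)-f(x)|\le |\widetilde{B}_{n,\lambda}(f-g;x)|+|(f-g)(x)|+|\widetilde{B}_{n,\lambda}(g;x)-g(x)|+|f(x+\beta_{n}(x))-f(x)|.
\end{equation*}
Since $\|\widetilde{B}_{n,\lambda}\|\le 3$ the first two terms contribute at most $4\|f-g\|_{C[0,1]}$, the third is controlled by the displayed estimate, and the last term is precisely $\omega_{\xi}(f,\beta_{n}(x)/\xi(x))$ upon writing $\beta_{n}(x)=h\,\xi(x)$. Taking the infimum over $g\in W^{2}(\phi)$ produces the $K$-functional $K_{2,\phi(x)}(f,\delta_{n}^{2}(x)/(4\phi^{2}(x)))$, and \eqref{3000} converts it into $C\,\omega_{2}^{\phi}(f,\delta_{n}(x)/(2\phi(x)))$, giving the claim.

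I expect the main obstacle to be the concavity-based remainder estimate above, since every other ingredient is either bookkeeping or a direct consequence of Lemma~2.1; the rest of the argument is standard once $\widetilde{B}_{n,\lambda}$ is set up so that linear functions are reproduced and the second moment $\delta_{n}^{2}(x)$ is identified with $B_{n,\lambda}((t-x)^{2};x)$.
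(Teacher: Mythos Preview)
The paper itself omits the proof of this theorem, so there is nothing to compare against directly; your outline is the standard Ditzian--Totik argument that one would expect here, and it is essentially correct.

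There is one genuine slip. You claim $\widetilde{B}_{n,\lambda}((t-x)^{2};x)=B_{n,\lambda}((t-x)^{2};x)$ and then set $\delta_{n}^{2}(x)$ equal to this common value. In fact
\[
\widetilde{B}_{n,\lambda}\bigl((t-x)^{2};x\bigr)=B_{n,\lambda}\bigl((t-x)^{2};x\bigr)+(x-x)^{2}-\bigl(x+\beta_{n}(x)-x\bigr)^{2}=B_{n,\lambda}\bigl((t-x)^{2};x\bigr)-\beta_{n}^{2}(x),
\]
and, more to the point, when you apply $\widetilde{B}_{n,\lambda}$ to the Taylor remainder $h(t)=\int_{x}^{t}(t-u)g''(u)\,du$ you pick up the extra piece $-h(x+\beta_{n}(x))$, which after the concavity estimate contributes $\beta_{n}^{2}(x)/\phi^{2}(x)\,\|\phi^{2}g''\|$. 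Hence the correct identification is
\[
\delta_{n}^{2}(x)=B_{n,\lambda}\bigl((t-x)^{2};x\bigr)+\beta_{n}^{2}(x)=\alpha_{n}(x)+\beta_{n}^{2}(x),
\]
with $\alpha_{n}(x)$ the second central moment; this is also consistent with how the paper later uses $\alpha_{n}$ and $\beta_{n}$ separately. With this correction the remainder of your argument (the $4\|f-g\|$ bound from $\|\widetilde{B}_{n,\lambda}\|\le 3$, the infimum over $g$, and the $K$-functional equivalence \eqref{3000}) goes through unchanged. A minor cosmetic point: in the final $\omega_{\xi}$ term you should write $|\beta_{n}(x)|/\xi(x)$, and one needs $x+\beta_{n}(x)\in[0,1]$, which follows from the explicit formula for $\beta_{n}$ in Lemma~2.1.
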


Now we give a local direct estimate of the rate of convergence with the help of Lipschitz-type function involving two parameters for operators (\ref{caioperator}). We write
\begin{eqnarray*}
	\nonumber Lip^{(k_1, k_2)}_{M}(\eta):=\Big\{f \in C[0, 1]:|f(t)-f(x)|\le M \frac{|t-x|^\eta}{(k_1x^2+k_2x+t)^{\frac{\eta}{2}}};~x \in (0, 1], t\in [0, 1]\Big\}
\end{eqnarray*}
for $k_1\ge 0, k_2>0$, where $\eta \in (0, 1]$ and $M$ is a positive constant (see \cite{lipztwo}).

\begin{theorem} \label{local} If $f \in Lip^{(k_1, k_2)}_{M}(\eta)$, then we have
	\begin{align*}
	|B_{n,\lambda}(f; x)-f(x)|&\leq M\alpha^{\frac{\eta}{2}}_n(x) (k_1x^2+k_2x)^{\frac{-\eta}{2}}
	\end{align*}
for all $ \lambda \in [-1,1] , $ $x \in (0,1]$ and $\eta \in (0, 1]$.
\end{theorem}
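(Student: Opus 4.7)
The plan is to exploit the linearity of $B_{n,\lambda}$, the normalization $B_{n,\lambda}(1;x)=1$ from the preceding lemma, and the definition of the Lipschitz class, and then finish with a standard Hölder argument.

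First I would write
\begin{equation*}
B_{n,\lambda}(f;x)-f(x)=\sum_{i=0}^{n}\bigl[f(i/n)-f(x)\bigr]\,\tilde b_{n,i}(\lambda;x),
\end{equation*}
using $B_{n,\lambda}(1;x)=1$. Since the Bézier bases $\tilde b_{n,i}(\lambda;x)$ in \eqref{bezier} are nonnegative for $\lambda\in[-1,1]$ and $x\in[0,1]$, the triangle inequality gives
\begin{equation*}
|B_{n,\lambda}(f;x)-f(x)|\le\sum_{i=0}^{n}|f(i/n)-f(x)|\,\tilde b_{n,i}(\lambda;x).
\end{equation*}
Using the defining inequality of $Lip^{(k_1,k_2)}_M(\eta)$ with $t=i/n$ and the obvious lower bound $k_1x^2+k_2x+i/n\ge k_1x^2+k_2x$ (since $i/n\ge 0$), I get
\begin{equation*}
|B_{n,\lambda}(f;x)-f(x)|\le M(k_1x^2+k_2x)^{-\eta/2}\sum_{i=0}^{n}|i/n-x|^{\eta}\,\tilde b_{n,i}(\lambda;x).
\end{equation*}

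Next, I would apply Hölder's inequality with exponents $p=2/\eta$ and $q=2/(2-\eta)$, splitting $\tilde b_{n,i}(\lambda;x)=\tilde b_{n,i}(\lambda;x)^{\eta/2}\cdot\tilde b_{n,i}(\lambda;x)^{1-\eta/2}$. This yields
\begin{equation*}
\sum_{i=0}^{n}|i/n-x|^{\eta}\,\tilde b_{n,i}(\lambda;x)\le\Bigl(\sum_{i=0}^{n}(i/n-x)^{2}\,\tilde b_{n,i}(\lambda;x)\Bigr)^{\eta/2}\Bigl(\sum_{i=0}^{n}\tilde b_{n,i}(\lambda;x)\Bigr)^{1-\eta/2},
\end{equation*}
and the second factor is $1$ by normalization, while the first factor equals $\bigl(B_{n,\lambda}((t-x)^{2};x)\bigr)^{\eta/2}=\alpha_n(x)^{\eta/2}$, where $\alpha_n(x):=B_{n,\lambda}((t-x)^{2};x)$ is the second central moment computable from the lemma above. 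Combining the two estimates produces exactly the asserted bound.

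The argument is essentially routine; the only subtle point is making sure the Bézier basis $\tilde b_{n,i}(\lambda;x)$ is nonnegative so that the triangle inequality and Hölder's inequality apply termwise — this is precisely why the admissible range $\lambda\in[-1,1]$ is imposed in \eqref{bezier}. No other obstacles are expected, since the identification of $\alpha_n(x)$ with the second central moment is immediate from the moment formulas in the preliminary lemma.
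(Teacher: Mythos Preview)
Your argument is correct and is the canonical route for this kind of Lipschitz-type estimate: use positivity of the operator together with $B_{n,\lambda}(1;x)=1$, replace the denominator via the trivial bound $k_1x^2+k_2x+t\ge k_1x^2+k_2x$, and then apply H\"older with conjugate exponents $(2/\eta,\,2/(2-\eta))$ to reduce to the second central moment $\alpha_n(x)=B_{n,\lambda}((t-x)^2;x)$. The paper itself does not display a proof of this theorem, but the method you outline is exactly the one introduced for the class $Lip^{(k_1,k_2)}_M(\eta)$ in \cite{lipztwo} on which the statement is modeled, so your proposal coincides with the intended argument; the only ingredient you should perhaps state rather than merely assert is the nonnegativity of the B\'ezier bases $\tilde b_{n,i}(\lambda;x)$ for $\lambda\in[-1,1]$, which is established in \cite{ye,cai}.
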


\begin{theorem}The following inequality holds:
	\begin{align*}
	|B_{n,\lambda}(f; x)-f(x)|&\leq |\beta_n(x)|~|f'(x)|+2\sqrt{\alpha_n(x)}w\big(f', \sqrt{\alpha_n(x)}~\big)
	\end{align*}
for $f \in C^{1}[0 ,1]$ and $x \in [0 ,1]$.
\end{theorem}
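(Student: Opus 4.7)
The plan is to follow the classical route for estimating the error of a positive linear operator in terms of the modulus of continuity of $f'$: apply the mean value theorem, split the error into a ``first moment'' piece and a ``modulus'' piece, then use Cauchy--Schwarz on the central moments.

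First, for fixed $x,t\in[0,1]$, I would write
\[
f(t)-f(x)=(t-x)f'(x)+(t-x)\bigl(f'(\xi)-f'(x)\bigr),
\]
with $\xi$ between $x$ and $t$, via the mean value theorem. Applying the linear operator $B_{n,\lambda}(\cdot\,;x)$ and using $B_{n,\lambda}(1;x)=1$, this splits the error as
\[
B_{n,\lambda}(f;x)-f(x)=f'(x)\,B_{n,\lambda}(t-x;x)+B_{n,\lambda}\!\bigl((t-x)(f'(\xi)-f'(x));x\bigr).
\]
The first term is exactly $f'(x)\beta_n(x)$, since by the moment computations in Lemma 2.1 the quantity $\beta_n(x):=B_{n,\lambda}(t-x;x)$ is the first central moment; this already produces the $|\beta_n(x)|\,|f'(x)|$ contribution.

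For the remainder term, I would use $|\xi-x|\le|t-x|$ together with the standard property $\omega(f';\eta\delta)\le(1+\eta)\,\omega(f';\delta)$ for the ordinary modulus of continuity, which yields
\[
|f'(\xi)-f'(x)|\le \omega\!\left(f';|t-x|\right)\le\left(1+\tfrac{|t-x|}{\delta}\right)\omega(f';\delta).
\]
Multiplying by $|t-x|$ and applying $B_{n,\lambda}(\cdot\,;x)$ gives
\[
\bigl|B_{n,\lambda}((t-x)(f'(\xi)-f'(x));x)\bigr|\le \omega(f';\delta)\Bigl(B_{n,\lambda}(|t-x|;x)+\tfrac{1}{\delta}B_{n,\lambda}((t-x)^2;x)\Bigr).
\]
By the Cauchy--Schwarz inequality for positive linear operators, $B_{n,\lambda}(|t-x|;x)\le\sqrt{B_{n,\lambda}((t-x)^2;x)}=\sqrt{\alpha_n(x)}$, where $\alpha_n(x):=B_{n,\lambda}((t-x)^2;x)$ is available from Lemma 2.1.

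Finally I would choose the optimal scale $\delta=\sqrt{\alpha_n(x)}$, which turns the remainder bound into $\bigl(\sqrt{\alpha_n(x)}+\sqrt{\alpha_n(x)}\bigr)\,\omega\!\bigl(f';\sqrt{\alpha_n(x)}\bigr)=2\sqrt{\alpha_n(x)}\,\omega\!\bigl(f';\sqrt{\alpha_n(x)}\bigr)$, and combining with the first term gives the claimed inequality. There is no real obstacle: the only slightly delicate step is making sure the constant $2$ (rather than $3/2$) appears, which is why the MVT form $(t-x)(f'(\xi)-f'(x))$ is preferable to an integral remainder $\int_x^t(f'(u)-f'(x))\,du$; the latter would introduce a factor $1/2$ and destroy the symmetry needed to get exactly $2$.
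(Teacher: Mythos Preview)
Your argument is correct and is exactly the classical route one expects here; the paper in fact states this theorem without proof, so there is nothing to compare against, but your splitting via $f(t)-f(x)=(t-x)f'(x)+(t-x)(f'(\xi)-f'(x))$, followed by the bound $|f'(\xi)-f'(x)|\le (1+|t-x|/\delta)\,\omega(f';\delta)$, Cauchy--Schwarz on $B_{n,\lambda}(|t-x|;x)$, and the choice $\delta=\sqrt{\alpha_n(x)}$ is the standard derivation and yields precisely the stated inequality.

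Two minor remarks. First, when you ``apply $B_{n,\lambda}$'' to $(t-x)(f'(\xi)-f'(x))$ you are implicitly using positivity of $B_{n,\lambda}$ together with the pointwise bound $|f(t)-f(x)-(t-x)f'(x)|\le |t-x|\,\omega(f';|t-x|)$; strictly speaking $\xi=\xi(t)$ from the mean value theorem need not define a nice function of $t$, so it is cleaner to pass to this pointwise inequality first and then apply the operator. Second, your closing comment is slightly off: the integral remainder $\int_x^t (f'(u)-f'(x))\,du$ would actually give the \emph{sharper} constant $3/2$ (one gets $B_{n,\lambda}(|t-x|;x)+\tfrac{1}{2\delta}\alpha_n(x)$ instead of $B_{n,\lambda}(|t-x|;x)+\tfrac{1}{\delta}\alpha_n(x)$), so it does not ``destroy'' anything---it merely proves more than is asked. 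Either route establishes the theorem as stated.
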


\subsection{Voronovskaja-type theorems}
In this part, we give a Gr\"{u}ss–Voronovskaja-type theorem and a quantitative Voronovskaja-type theorem for $B_{n,\lambda}(f; x)$.

We first obtain a quantitative Voronovskaja-type theorem for $B_{n,\lambda}(f; x)$ using Ditzian-Totik modulus of smoothness defined as 
\begin{eqnarray*}
	\omega_{\phi}(f, \delta):=\sup_{0<|h|\leq\delta} \bigg\{\bigg|f\bigg(x+\frac{h\phi(x)}{2}\bigg)-f\bigg(x-\frac{h\phi(x)}{2}\bigg)\bigg|, x\pm\frac{h\phi(x)}{2}\in [0 ,1]\bigg\},
\end{eqnarray*}
where $\phi(x)=(x(1-x))^{1/2}$ and $f \in C[0 ,1]$, and corresponding Peetre's $K$-functional is defined by 
\begin{eqnarray*}
	K_{\phi}(f,\delta)=\inf_{g \in W_\phi[0,1]}\big\{||f-g||+\delta||\phi g'||:g\in C^1[0 ,1], \delta>0\big\},
\end{eqnarray*}
where $W_{\phi}[0,1]=\{g:~g \in AC_{loc}[0, 1],~\|\phi g'\|<\infty\}$ and $AC_{loc}[0 ,1]$ is the class of absolutely continuous functions defined on $[a, b] \subset [0 ,1]$. There exists a constant $C > 0$ such that
\begin{eqnarray*}
	K_{\phi}(f,\delta)\leq  C~\omega_{\phi}(f, \delta).
\end{eqnarray*}

\begin{theorem} \label{thm10} Assume that $f\in C[0, 1]$ such that $f', f'' \in C[0,1]$. Then, we have
	\begin{align*}
	\bigg|B_{n,\lambda}(f; x)-f(x)-\beta_{n}f'(x)-\Big(\frac{\alpha_n+1}{2}\Big)f''(x)\bigg|\leq \frac{C}{n}\phi^{2}(x) \omega_{\phi}\bigg(f'',n^{-1/2}\bigg)
	\end{align*}
for every $x \in [0 ,1]$ and sufficiently
large $n$, where $C$ is a positive constant, $\alpha_n$ and $\beta_n$ are defined in Theorem \ref{alphan}.
\end{theorem}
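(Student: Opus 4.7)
The strategy is a second-order Taylor expansion with integral remainder, combined with the Peetre $K$-functional technique associated to the step-weight $\phi(x)=\sqrt{x(1-x)}$. One writes, for $t,x\in[0,1]$,
\[
f(t)=f(x)+(t-x)f'(x)+\tfrac{1}{2}(t-x)^{2}f''(x)+R(t,x),\quad R(t,x):=\int_{x}^{t}(t-u)\bigl[f''(u)-f''(x)\bigr]\,du,
\]
and applies $B_{n,\lambda}(\,\cdot\,;x)$ to both sides. Using the moment identities of the preceding lemma and the definitions of $\alpha_{n},\beta_{n}$ from Theorem \ref{alphan}, the first three terms produce exactly $f(x)+\beta_{n}f'(x)+\bigl(\tfrac{\alpha_{n}+1}{2}\bigr)f''(x)$, so the quantity to be estimated reduces to $|B_{n,\lambda}(R(\,\cdot\,,x);x)|$.

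To bound this remainder I would introduce an auxiliary $g\in W_{\phi}[0,1]$ and decompose
\[
|f''(u)-f''(x)|\le 2\|f''-g\|+|g(u)-g(x)|\le 2\|f''-g\|+\|\phi g'\|\Bigl|\int_{x}^{u}\frac{ds}{\phi(s)}\Bigr|.
\]
A standard Ditzian--Totik estimate $\bigl|\int_{x}^{u}ds/\phi(s)\bigr|\le|u-x|/\phi(x)$, which follows from Cauchy--Schwarz and the identity $1/\phi^{2}(s)=1/s+1/(1-s)$ together with $\ln(1+y)\le y$, then gives
\[
|R(t,x)|\le \|f''-g\|\,(t-x)^{2}+C\,\|\phi g'\|\,\phi(x)^{-1}|t-x|^{3}.
\]
Applying $B_{n,\lambda}$ in $t$ and using the Cauchy--Schwarz inequality $B_{n,\lambda}(|t-x|^{3};x)\le\sqrt{B_{n,\lambda}((t-x)^{2};x)\,B_{n,\lambda}((t-x)^{4};x)}$, together with the central-moment estimates $B_{n,\lambda}((t-x)^{2};x)\le C\phi^{2}(x)/n$ and $B_{n,\lambda}((t-x)^{4};x)\le C\phi^{4}(x)/n^{2}$, leads to
\[
|B_{n,\lambda}(R(\,\cdot\,,x);x)|\le \frac{C\phi^{2}(x)}{n}\Bigl\{\|f''-g\|+n^{-1/2}\|\phi g'\|\Bigr\}.
\]
Taking the infimum over $g\in W_{\phi}[0,1]$ replaces the bracket by $K_{\phi}(f'',n^{-1/2})$, and the equivalence $K_{\phi}(f,\delta)\le C\,\omega_{\phi}(f,\delta)$ recalled just before the theorem finishes the proof.

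The main obstacle I expect lies in establishing the two central-moment inequalities $B_{n,\lambda}((t-x)^{2};x)=O(\phi^{2}(x)/n)$ and $B_{n,\lambda}((t-x)^{4};x)=O(\phi^{4}(x)/n^{2})$ uniformly in $x\in[0,1]$ and $\lambda\in[-1,1]$. The formulas for $B_{n,\lambda}(t^{k};x)$ carry $\lambda$-dependent correction terms of the form $\lambda\cdot[\,\cdots\,]/n^{j}$ arising from the B\'ezier basis $\tilde b_{n,i}(\lambda;x)$, together with boundary contributions $x^{n+1}$ and $(1-x)^{n+1}$; one must expand the central moments explicitly and verify that all these extra terms are of lower order than the classical Bernstein leading terms for sufficiently large $n$. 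Once this bookkeeping is done, the remaining Taylor plus $K$-functional reasoning is completely routine.
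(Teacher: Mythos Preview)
The paper as supplied does not actually contain a proof of this theorem; only the statement appears in Section~2.2. Your plan is precisely the standard route for quantitative Voronovskaja theorems with the Ditzian--Totik modulus, as in Acar--Aral--Ra\c{s}a~\cite{acar} (cited in the paper), and is what one would expect the omitted proof to be: Taylor expansion with integral remainder, reduction to $|B_{n,\lambda}(R(\cdot,x);x)|$, $K$-functional splitting of $f''$, and Cauchy--Schwarz on the third absolute moment combined with second and fourth central-moment bounds of order $\phi^{2}(x)/n$ and $\phi^{4}(x)/n^{2}$.

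One small correction. The inequality $\bigl|\int_{x}^{u}\phi^{-1}(s)\,ds\bigr|\le |u-x|/\phi(x)$ does hold (with an extra factor $2$), but your sketched justification via Cauchy--Schwarz and $\ln(1+y)\le y$ actually yields $|u-x|/\sqrt{x(1-u)}$, not $|u-x|/\phi(x)$, and this is not what you need. The clean argument uses the concavity of $\phi^{2}$: for $s$ between $x$ and $u$ one has $|u-s|/\phi^{2}(s)\le |u-x|/\phi^{2}(x)$, whence $\phi^{-1}(s)\le \phi^{-1}(x)\sqrt{|u-x|/|u-s|}$, and integrating in $s$ gives $\bigl|\int_{x}^{u}\phi^{-1}(s)\,ds\bigr|\le 2|u-x|/\phi(x)$. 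With this fix the rest of your argument goes through exactly as written.

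Your identification of the main labour---checking that the $\lambda$-correction terms and the boundary contributions $x^{n+1}$, $(1-x)^{n+1}$ in the explicit moment formulas of Lemma~2.1 do not spoil the orders $\phi^{2}(x)/n$ and $\phi^{4}(x)/n^{2}$ for the second and fourth central moments---is accurate, and is presumably why the paper records those moment formulas up to $t^{4}$.
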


\section{Statistical approximation properties by weighted mean matrix method}
In this part, we study on statistical approximation properties and estimate rate of weighted $A$-statistical convergence. We also use statistical convergence to prove a Voronovskaja-type approximation theorem.

\begin{theorem}\label{thm6} Let $A=(a_{nk})$ be a weighted non-negative regular summability matrix for $ n, k\in \mathbb N $ and $q=(q_n)$ be a sequence of non-negative numbers such that $q_0>0$ and $Q_n=\sum_{k=0}^{n}q_k \to \infty$ as $n \to \infty$. 
	For any $f \in C[0 ,1]$, we have
	\begin{align*}
	S_A^{\widetilde N}-\lim_{n\to\infty} \|B_{n,\lambda}(f; x)-f(x)\|_{C[0 ,1]}=0.
	\end{align*}
\end{theorem}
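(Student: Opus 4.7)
The plan is to reduce this to the Korovkin-type theorem for weighted $A$-statistical convergence: if a sequence $(L_n)$ of positive linear operators on $C[0,1]$ satisfies
\begin{equation*}
S_A^{\widetilde N}-\lim_{n\to\infty}\|L_n e_i - e_i\|_{C[0,1]}=0\qquad (i=0,1,2),
\end{equation*}
with $e_i(x)=x^i$, then the same statistical convergence holds for every $f\in C[0,1]$. Once this Korovkin-type tool is invoked (it is the natural analog in the weighted $A$-statistical setting of the classical result, and the hypotheses on $A$ and $q=(q_n)$ in the statement are exactly what makes $S_A^{\widetilde N}$ a regular summability-type convergence), the whole problem collapses to verifying the three test-function conditions for $B_{n,\lambda}$.

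For the three checks I would read off the moments directly from the lemma preceding Theorem~\ref{alphan}. The case $i=0$ is immediate since $B_{n,\lambda}(1;x)=1$. For $i=1$, the lemma gives
\begin{equation*}
B_{n,\lambda}(t;x)-x=\frac{1-2x+x^{n+1}-(1-x)^{n+1}}{n(n-1)}\lambda,
\end{equation*}
whose absolute value is bounded by a constant multiple of $1/[n(n-1)]$ uniformly in $x\in[0,1]$ and $\lambda\in[-1,1]$. For $i=2$ the lemma yields
\begin{equation*}
B_{n,\lambda}(t^2;x)-x^2 = \frac{x(1-x)}{n}+O\!\left(\tfrac{1}{n}\right),
\end{equation*}
again uniformly in $x$ and $\lambda$. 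Hence in each of the three cases $\|B_{n,\lambda}e_i-e_i\|_{C[0,1]}\to 0$ in the ordinary sense as $n\to\infty$.

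Finally, I would observe that ordinary convergence to zero of a bounded sequence always implies weighted $A$-statistical convergence to zero under the standing regularity hypotheses on $A$ and $(q_n)$ (for each $\varepsilon>0$ the set $\{k:\|B_{k,\lambda}e_i-e_i\|\geq\varepsilon\}$ is finite, so its $A$-$\widetilde N$ transform vanishes in the limit). Therefore the three Korovkin test conditions hold in the $S_A^{\widetilde N}$ sense, and an application of the Korovkin-type theorem concludes the proof.

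The only non-routine point I anticipate is ensuring that the exact variant of the Korovkin theorem one cites is stated for the weighted mean matrix method $S_A^{\widetilde N}$ associated with a non-negative regular matrix $A$ and a weight sequence $q$ with $Q_n\to\infty$; this is where the hypotheses on $A$ and $q$ in the statement get consumed. Everything else is bookkeeping on the explicit moments.
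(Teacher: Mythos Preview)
Your proposal is correct and is precisely the standard route: verify the three Korovkin test conditions $e_0,e_1,e_2$ using the explicit moment formulas from the preliminary lemma, note that ordinary convergence implies $S_A^{\widetilde N}$-convergence under the stated regularity hypotheses, and then invoke the weighted $A$-statistical Korovkin theorem (this is exactly the result of Mohiuddine--Alotaibi--Hazarika cited in the paper as \cite{sa1}). The paper, in the form given, does not write out a proof of this theorem, but your argument is the intended one and matches the role of the cited reference.
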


\subsection{A Voronovskaja-type approximation theorem }
We prove a Voronovskaja-type approximation theorem by $\mathring{B}_{n,\lambda}(f; x)$ family of linear operators.

\begin{theorem} Let $A=(a_{nk})$ be a weighted non-negative regular summability matrix and let $(x_n)$
	be a sequence of real numbers such that $S_A^{\widetilde N}-\lim x_n=0$. Also let $\mathring{B}_{n,\lambda}(f; x)$ be a sequence of positive linear operators acting from $C_B[0 ,1]$ into $C[0 ,1]$ defined by $$\mathring{B}_{n,\lambda}(f; x)=(1+x_n)B_{n,\lambda}(f; x).$$ 
	Then for every $f\in C_B[0, 1]$, and $f', f'' \in C_B[0,1]$ we have
	\begin{align*}
	S_A^{\widetilde N}-\lim_{n \to \infty} n\big\{\mathring{B}_{n,\lambda}(f; x)-f(x)\big\}=\frac{f''(x)}{2}x(1-x).
	\end{align*}
\end{theorem}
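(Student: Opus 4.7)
The plan is to combine a second-order Taylor expansion of $f$ at $x$ with the explicit moment formulas from the lemma and then transfer the resulting identities to the weighted $A$-statistical setting using Theorem~\ref{thm6}. Since $f \in C_B[0,1]$ with $f',f'' \in C_B[0,1]$, I would write
\[
f(t)=f(x)+f'(x)(t-x)+\tfrac{1}{2}f''(x)(t-x)^{2}+h(t,x)(t-x)^{2},
\]
where $h(\cdot,x)$ is bounded on $[0,1]$ uniformly in $x$ and $\lim_{t\to x}h(t,x)=0$ for each fixed $x$. Applying $\mathring{B}_{n,\lambda}(\,\cdot\,;x)=(1+x_{n})B_{n,\lambda}(\,\cdot\,;x)$ to both sides and substituting $B_{n,\lambda}(1;x)=1$, $B_{n,\lambda}(t-x;x)=\beta_{n}(x)$, and $B_{n,\lambda}((t-x)^{2};x)=\alpha_{n}(x)$ from the lemma gives
\[
\mathring{B}_{n,\lambda}(f;x)-f(x)=x_{n}f(x)+(1+x_{n})\beta_{n}(x)f'(x)+\tfrac{1+x_{n}}{2}\alpha_{n}(x)f''(x)+(1+x_{n})B_{n,\lambda}\bigl(h(t,x)(t-x)^{2};x\bigr).
\]

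Next, I multiply by $n$ and analyse each summand. A direct inspection of the moment formulas in the lemma yields the ordinary limits $n\beta_{n}(x)\to 0$ and $n\alpha_{n}(x)\to x(1-x)$, and ordinary convergence is a fortiori weighted $A$-statistical convergence. Because $S_A^{\widetilde N}\text{-}\lim x_{n}=0$, the factors $(1+x_{n})$ are statistically close to $1$, so by the product rule for statistical limits with bounded sequences the leading contribution is
\[
S_A^{\widetilde N}\text{-}\lim_{n\to\infty}\;\frac{n\,\alpha_{n}(x)(1+x_{n})}{2}f''(x)=\frac{x(1-x)}{2}f''(x),
\]
while the $f'(x)$ piece vanishes because $n\beta_n(x)\to 0$.

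The main obstacle will be to show that the residual terms $n(1+x_{n})B_{n,\lambda}(h(t,x)(t-x)^{2};x)$ and $n x_{n}f(x)$ are $S_A^{\widetilde N}$-null. For the quadratic remainder I plan to use Cauchy--Schwarz,
\[
\bigl|B_{n,\lambda}\bigl(h(t,x)(t-x)^{2};x\bigr)\bigr|\le \sqrt{B_{n,\lambda}\bigl(h^{2}(t,x);x\bigr)}\sqrt{B_{n,\lambda}\bigl((t-x)^{4};x\bigr)},
\]
together with the fourth central moment estimate $B_{n,\lambda}((t-x)^{4};x)=O(n^{-2})$, which one reads off the lemma by expanding $(t-x)^{4}$. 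The factor $n$ is then absorbed by $\sqrt{n^{2}\cdot n^{-2}}=1$, and the Korovkin-type statistical convergence of Theorem~\ref{thm6}, applied to the bounded continuous function $t\mapsto h^{2}(t,x)$ whose value at $t=x$ is zero, sends $B_{n,\lambda}(h^{2}(t,x);x)$ to $0$ in $S_A^{\widetilde N}$. Boundedness of $(1+x_{n})$ then preserves the vanishing limit.

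The subtler point is the term $nx_{n}f(x)$ arising from $\mathring{B}_{n,\lambda}(1;x)-1=x_{n}$: the statistical hypothesis on $(x_n)$ must be strong enough (or invoked together with the weighted regular matrix $A$) to guarantee that $\{n:|nx_{n}f(x)|\ge\varepsilon\}$ carries zero weighted $A$-density for every $\varepsilon>0$. Establishing this density estimate from the definition of $S_A^{\widetilde N}$-convergence on the sequence $(x_n)$ is the technical heart of the argument; once it is in place, collecting the four summands above yields the stated statistical Voronovskaja identity.
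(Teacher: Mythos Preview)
Your overall strategy---Taylor expansion at $x$, insertion of the central-moment identities $\beta_n(x)$ and $\alpha_n(x)$ from the lemma, and Cauchy--Schwarz control of the remainder via the fourth central moment---is precisely the standard Voronovskaja machinery that the paper relies on, and your treatment of the main terms and of $B_{n,\lambda}(h(t,x)(t-x)^2;x)$ is correct.

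The difficulty you flag in the last paragraph, however, is not a ``technical heart'' that can be discharged from the definition: it is a genuine obstruction under the hypothesis as stated. The term $n x_n f(x)$ (more precisely $n x_n B_{n,\lambda}(f;x)$, which is statistically equivalent) is required to be $S_A^{\widetilde N}$-null, and for that one needs $S_A^{\widetilde N}\text{-}\lim n x_n = 0$. The assumption $S_A^{\widetilde N}\text{-}\lim x_n = 0$ alone does not imply this: take $x_n = n^{-1/2}$, which converges to $0$ in every reasonable sense, yet $n x_n = n^{1/2}\to\infty$. No density argument extracted from the bare hypothesis on $(x_n)$ can repair the mismatch of scales. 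So either one must read the hypothesis as (or strengthen it to) $S_A^{\widetilde N}\text{-}\lim n x_n = 0$---in which case your argument goes through verbatim---or the statement is incomplete. You should say this explicitly rather than presenting the step as a routine density estimate still to be carried out.
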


\section{Approximation properties for bivariate case}
 In this part, we construct bivariate $ \lambda $ Bernstein operators and study their approximation properties.

Let $I=I_1 \times I_1  =[0,1]\times[0,1] $ and $(x,y)\in I  $, then we construct bivariate $ \lambda $ Bernstein operators as
\begin{equation*} \bar{B}_{n,m}\left(f;x,y;\lambda\right) =\sum_{k_1=0}^{n}\sum_{k_2=0}^{m}f\left(\dfrac{k_1}{n},\dfrac{k_2}{m} \right) \tilde{b}_{n,k_1}(\lambda;x)\tilde{b}_{m,k_2}(\lambda;y)
\end{equation*}
for $f\in C(I)  $, where B\'{e}zier bases $\tilde{b}_{n,k_1}(\lambda; x),~~\tilde{b}_{m,k_2}(\lambda; x)~~(k_1=0,1,\dots, n;~~k_2=0,1,\dots, m)$ are defined in (\ref{bezier}).
\begin{lemma}\label{lemx} For any natural number $n$ $(n \ge 2)$ the following equalities hold:
\begin{align}
 \bar{B}_{n,m}(1; x,y;\lambda)&=1;\label{b1}\\
\bar{B}_{n,m}(s; x,y;\lambda)&=x+\frac{1-2x+x^{n+1}-(1-x)^{n+1}}{n(n-1)}\lambda;\label{b2}\\
\bar{B}_{n,m}(t; x,y;\lambda)&=y+\frac{1-2y+y^{m+1}-(1-y)^{m+1}}{m(m-1)}\lambda;\label{b3}\\
\bar{B}_{n,m}(s^2; x,y;\lambda)&=x^2+\frac{x(1-x)}{n}+\left[ \frac{2x-4x^2+2x^{n+1}}{n(n-1)}+ \frac{x^{n+1}+(1-x)^{n+1}-1}{n^2(n-1)}  \right] \lambda;\label{b4}\\
\bar{B}_{n,m}(t^2; x,y;\lambda)&=y^2+\frac{y(1-y)}{m}+\left[ \frac{2y-4y^2+2y^{m+1}}{m(m-1)}+ \frac{y^{m+1}+(1-y)^{m+1}-1}{m^2(m-1)}  \right] \lambda.\label{b5}
\end{align}
\end{lemma}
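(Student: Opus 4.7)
The plan is to exploit the tensor-product structure of $\bar{B}_{n,m}$ and reduce each moment computation to the univariate identities already listed in (\ref{lemmamoment}). Because the bivariate basis is the product $\tilde{b}_{n,k_1}(\lambda;x)\,\tilde{b}_{m,k_2}(\lambda;y)$, for any separable test function $f(s,t)=f_1(s)\,f_2(t)$ the defining double sum factors, giving
\begin{equation*}
\bar{B}_{n,m}\bigl(f_1(s)f_2(t);x,y;\lambda\bigr)=B_{n,\lambda}(f_1;x)\cdot B_{m,\lambda}(f_2;y).
\end{equation*}
All five test functions in the statement are separable — each is either the constant $1$ or a pure power of one variable with the other factor being $1$ — so this single factorization reduces everything to the one-variable case.

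I would organize the computation as follows. For (\ref{b1}), take $f_1=f_2\equiv 1$ and use $B_{n,\lambda}(1;x)=B_{m,\lambda}(1;y)=1$. For (\ref{b2}), write the monomial $s$ as $s\cdot 1$ so that
\begin{equation*}
\bar{B}_{n,m}(s;x,y;\lambda)=B_{n,\lambda}(t;x)\cdot B_{m,\lambda}(1;y)=B_{n,\lambda}(t;x),
\end{equation*}
and substitute the univariate expression for $B_{n,\lambda}(t;x)$ from (\ref{lemmamoment}). Identity (\ref{b3}) is then the symmetric image under $(n,x)\leftrightarrow(m,y)$. The second-moment identities (\ref{b4}) and (\ref{b5}) are obtained in exactly the same way, this time substituting $B_{n,\lambda}(t^2;x)$ and $B_{m,\lambda}(t^2;y)$, respectively.

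There is no real obstacle: the entire argument is bookkeeping on top of the already-established univariate formulas. The only step that genuinely deserves explicit verification is the factorization of the double sum into a product of single sums, which is immediate by regrouping once the integrand is separable and then using $B_{n,\lambda}(1;\cdot)\equiv 1$ to kill the trivial factor. The hypothesis $n\geq 2$ (and implicitly $m\geq 2$) is needed only so that the denominators $n(n-1)$ and $n^2(n-1)$ appearing on the right-hand sides remain nonzero.
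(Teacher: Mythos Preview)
Your proposal is correct and is precisely the intended argument: the paper does not spell out a proof of this lemma, but it is implicitly the tensor-product reduction to the univariate moments of Lemma~\ref{lemmamoment} that you describe. Nothing further is needed.
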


\begin{theorem}
The sequence
$ \bar{B}_ {n,m}\left(f;x,y;\lambda\right)$
of operators convergences uniformly to $f(x,y)$ on $I$ for each $f\in C\left(I\right)$.
\end{theorem}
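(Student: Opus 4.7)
The plan is to invoke the bivariate Korovkin–Volkov theorem, which states that a sequence of positive linear operators $L_{n,m}\colon C(I)\to C(I)$ converges uniformly to $f$ for every $f\in C(I)$ provided the convergence $L_{n,m}(e_i;x,y)\to e_i(x,y)$ holds uniformly on $I$ for the four test functions $e_0=1$, $e_1(x,y)=x$, $e_2(x,y)=y$, $e_3(x,y)=x^2+y^2$. Since $\bar{B}_{n,m}$ is clearly a positive linear operator on $C(I)$ (tensor product of the positive $\lambda$-Bernstein operators), it suffices to verify these four conditions, and the moment formulas needed are precisely (\ref{b1})–(\ref{b5}) of Lemma \ref{lemx}.

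First, (\ref{b1}) gives $\bar{B}_{n,m}(1;x,y;\lambda)=1$ identically, so the $e_0$ condition is trivial. For $e_1$, I would use (\ref{b2}) together with the bounds $|1-2x|\le 1$, $0\le x^{n+1}\le 1$, $0\le(1-x)^{n+1}\le 1$ and $|\lambda|\le 1$ to obtain
\[
\|\bar{B}_{n,m}(s;\cdot,\cdot;\lambda)-e_1\|_{C(I)}\le \frac{3}{n(n-1)}\longrightarrow 0,
\]
and identically from (\ref{b3}) the bound $\|\bar{B}_{n,m}(t;\cdot,\cdot;\lambda)-e_2\|_{C(I)}\le \frac{3}{m(m-1)}\to 0$.

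For the quadratic test function, (\ref{b4}) gives
\[
\bar{B}_{n,m}(s^2;x,y;\lambda)-x^2=\frac{x(1-x)}{n}+R_n(x)\lambda,
\]
where $R_n(x)$ is a bounded remainder of order $O(1/(n(n-1)))$ uniformly in $x\in[0,1]$; since $x(1-x)\le 1/4$, the right side tends to $0$ uniformly on $I$. The analogous estimate from (\ref{b5}) controls $\bar{B}_{n,m}(t^2;x,y;\lambda)-y^2$, and adding the two gives uniform convergence for $e_3=s^2+t^2$. All four Korovkin conditions are thus verified, and the theorem follows.

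There is no real obstacle here: everything reduces to estimating uniformly in $(x,y)\in I$ the explicit rational-in-$n$, rational-in-$m$ correction terms appearing in Lemma \ref{lemx}, and each such term is bounded by a constant multiple of $1/n$, $1/m$, $1/(n(n-1))$ or $1/(m(m-1))$ because the polynomials in $x$ and $y$ that multiply $\lambda$ are manifestly bounded on $[0,1]$ and $|\lambda|\le 1$. The only point worth being explicit about is the uniformity of these bounds, which is immediate from the preceding observations.
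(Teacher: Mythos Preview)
Your proposal is correct and follows essentially the same route as the paper: both invoke Volkov's bivariate Korovkin theorem and verify the four test-function conditions using the moment formulas of Lemma~\ref{lemx}. If anything, your version is slightly more careful in that you give explicit uniform bounds on the correction terms, whereas the paper simply writes down the pointwise limits and appeals to Volkov; but the underlying argument is identical.
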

\begin{proof}It is enough to prove the following condition
	\begin{eqnarray*}
	\lim_{n,m \to \infty} \bar{B}_ {n,m}\left(e_{ij}(x,y);x,y;\lambda\right)=x^i y^j, \ \ (i,j) \in \left\lbrace (0,0), (1,0), (0,1)\right\rbrace
	\end{eqnarray*}
converges uniformly on $I$.
We clearly have
\begin{eqnarray*}
	\lim_{m,n \to \infty}\bar{B}_ {n,m}\left(e_{00}(x,y);x,y;\lambda\right)=1.
\end{eqnarray*}
We have 
\begin{eqnarray*}
	\lim_{n,m \to \infty}\bar{B}_{n,m}\left(e_{10}(x,y);x,y;\lambda\right)&=&\lim_{n \to \infty}\left[ x+\frac{1-2x+x^{n+1}-(1-x)^{n+1}}{n(n-1)}\lambda\right] =e_{10}(x,y),\\
	\lim_{n,m \to \infty}\bar{B}_{n,m}\left(e_{01}(x,y);x,y;\lambda\right)&=&\lim_{m \to \infty}\left[ y+\frac{1-2y+y^{m+1}-(1-y)^{m+1}}{m(m-1)}\lambda\right]=e_{01}(x,y)
\end{eqnarray*}
by Lemma \ref{lemx}, and 
\begin{eqnarray*}
	&&\lim_{n,m \to \infty}\bar{B}_{n,m}\left(e_{02}(x,y)+e_{20}(x,y);x,y;\lambda\right)\\
&&= \lim_{n,m \to \infty}\left\lbrace x^2+\frac{x(1-x)}{n}+\left[ \frac{2x-4x^2+2x^{n+1}}{n(n-1)}+ \frac{x^{n+1}+(1-x)^{n+1}-1}{n^2(n-1)}  \right] \lambda\right. \\
&&\qquad+\left. y^2+\frac{y(1-y)}{m}+\left[ \frac{2y-4y^2+2y^{m+1}}{m(m-1)}+ \frac{y^{m+1}+(1-y)^{m+1}-1}{m^2(m-1)}  \right] \lambda\right\rbrace \\
	&&=e_{02}(x,y)+e_{20}(x,y).
\end{eqnarray*}
Bearing in mind the above conditions and
Korovkin type theorem established by Volkov \cite{volkov}
	\begin{eqnarray*}
	\lim_{m,n \to \infty}\bar{B}_{n,m}\left(e_{ij}(x,y);x,y;\lambda\right)=x^i y^j
\end{eqnarray*}
converges uniformly.
\end{proof}

Now we compute the rates of convergence of operators
$\bar{B}_ {n,m}\left(f;x,y;\lambda\right)$ to $f (x, y)$ by means of
the modulus of continuity. We first give the needed definitions.

Complete modulus of continuity for a bivariate case is defined as follows:
\begin{equation*}
\omega(f,\delta)=\sup\left\{|f(s,t)-f(x,y)|:\sqrt{(s-x)^{2}+(t-y)^{2}}\leq\delta\right\}
\end{equation*}
for $f \in C( I_{ab})$ and for every $(s,t),(x, y)\in I_{ab}=[0,a]\times[0,b]$. Partial moduli of continuity with respect to $x$ and $y$ are defined as
\begin{eqnarray*}
\omega_{1}(f,\delta)&=&\sup\left\{|f(x_{1},y)-f(x_{2},y)|:y\in[0,a]\,\,\textrm{and}\,\,|x_{1}-x_{2}|\leq\delta\right\},\\
\omega_{2}(f,\delta)&=&\sup\left\{|f(x,y_{1})-f(x,y_{2})|:x\in[0,b]\,\,\textrm{and}\,\,|y_{1}-y_{2}|\leq\delta\right\}.
\end{eqnarray*}
Peetre's $K$-functional is given by
\begin{eqnarray*}
K(f,\delta)=\inf_{g\in
C^{2}(I_{ab})}\left\{\|f-g\|_{C(I_{ab})}+\delta\|g\|_{C^{2}(I_{ab})}\right\}
\end{eqnarray*}
for $\delta> 0$, where $C^{2}(I_{ab})$ is the space of functions of $f$ such that
$f$, $\frac{\partial^{j}f}{\partial x^{j}}$ and
$\frac{\partial^{j}f}{\partial y^{j}}$ $(j=1,2)$ in $C(I_{ab})$ \cite{Peetre}. 
We now give an estimate of the rates of
convergence of operators $\bar{B}_ {n,m}\left(f;x, y;\lambda\right)$.
\begin{theorem}
\label{thorem3.1}Let $f\in C\left(I \right) $, then we have
\begin{equation*}
\left\vert\bar{B}_{n,m}\left(f;x, y;\lambda\right) -f\left(x, y\right)
\right\vert \leq 4\omega\left( f;\sqrt{\delta_{n}(x)},\sqrt{\delta_{n}(y)} \right) \label{10.2}
\end{equation*}
for all $x\in I$.
\end{theorem}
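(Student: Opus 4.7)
The plan is to follow the classical Korovkin-type route for bivariate positive linear operators, exploiting two structural facts: first, that $\bar{B}_{n,m}$ is a tensor product of the univariate operators, i.e. $\bar{B}_{n,m}(\phi(s)\psi(t);x,y;\lambda) = B_{n,\lambda}(\phi;x)\,B_{m,\lambda}(\psi;y)$, which is immediate from the separable form of the bivariate B\'{e}zier basis; and second, that $\bar{B}_{n,m}(1;x,y;\lambda)=1$, which we already have from \eqref{b1}. I interpret $\delta_n(x)$ as the second central moment $\delta_n(x) := B_{n,\lambda}((s-x)^2;x)$, and analogously $\delta_m(y) := B_{m,\lambda}((t-y)^2;y)$, and take the bivariate modulus in the theorem to mean $\omega(f;\delta_1,\delta_2) = \sup\{|f(s,t)-f(x,y)|:|s-x|\leq \delta_1,\ |t-y|\leq \delta_2\}$.

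First, by positivity and linearity combined with $\bar{B}_{n,m}(1;x,y;\lambda)=1$, I would write
\begin{equation*}
\bigl|\bar{B}_{n,m}(f;x,y;\lambda)-f(x,y)\bigr|\leq \bar{B}_{n,m}\bigl(|f(s,t)-f(x,y)|;x,y;\lambda\bigr).
\end{equation*}
Next I would invoke the standard product-form estimate for the modulus of continuity,
\begin{equation*}
|f(s,t)-f(x,y)| \leq \Bigl(1+\tfrac{|s-x|}{\delta_1}\Bigr)\Bigl(1+\tfrac{|t-y|}{\delta_2}\Bigr)\omega(f;\delta_1,\delta_2),
\end{equation*}
which is proved exactly as in the one-variable case by subdividing the increments in each coordinate into pieces of length at most $\delta_1$ and $\delta_2$ respectively.

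Then, applying $\bar{B}_{n,m}$ to both sides and using the tensor-product factorization, I get
\begin{equation*}
\bar{B}_{n,m}\bigl(|f(s,t)-f(x,y)|;x,y;\lambda\bigr) \leq \omega(f;\delta_1,\delta_2)\,B_{n,\lambda}\!\Bigl(1+\tfrac{|s-x|}{\delta_1};x\Bigr)\,B_{m,\lambda}\!\Bigl(1+\tfrac{|t-y|}{\delta_2};y\Bigr).
\end{equation*}
Each of the two univariate factors is controlled by the Cauchy--Schwarz inequality:
\begin{equation*}
B_{n,\lambda}(|s-x|;x)\leq \sqrt{B_{n,\lambda}((s-x)^2;x)}=\sqrt{\delta_n(x)},
\end{equation*}
and the analogous bound holds for $B_{m,\lambda}(|t-y|;y)$. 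Choosing $\delta_1=\sqrt{\delta_n(x)}$ and $\delta_2=\sqrt{\delta_m(y)}$ makes each parenthesized factor at most $2$, yielding the desired product $2\cdot 2 = 4$.

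The only step that requires genuine care is identifying $\delta_n(x)$ in closed form: this amounts to computing $B_{n,\lambda}((s-x)^2;x) = B_{n,\lambda}(s^2;x) - 2x\,B_{n,\lambda}(s;x) + x^2$ from Lemma~\ref{lemx}, giving $\delta_n(x) = \frac{x(1-x)}{n}+\bigl[\ldots\bigr]\lambda$, which stays uniformly bounded and tends to $0$. The product-form modulus inequality is the most substantive nonroutine ingredient, but it is entirely standard. Everything else is bookkeeping on the moments supplied by \eqref{b1}--\eqref{b5}.
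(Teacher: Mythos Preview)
The paper does not actually supply a proof of this theorem; the statement is followed immediately by the next topic, so there is nothing to compare your argument against directly. That said, your proposal is the standard route for bivariate positive linear operators and is correct: positivity together with $\bar{B}_{n,m}(1;x,y;\lambda)=1$ and the tensor-product structure reduce everything to the univariate second central moments, and the product-form modulus inequality combined with Cauchy--Schwarz delivers the constant $4$. Your interpretation of the undefined notation---taking $\omega(f;\delta_1,\delta_2)$ to be the two-parameter full modulus and $\delta_n(x)=B_{n,\lambda}((s-x)^2;x)$---is the natural one and is consistent with how $\alpha_n$, $\beta_n$, $\delta_n$ are used (without explicit definition) in the univariate sections; your tacit correction of $\delta_n(y)$ to $\delta_m(y)$ is almost certainly what is intended. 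The only implicit assumption worth flagging is that $B_{n,\lambda}$ is a \emph{positive} linear operator for $\lambda\in[-1,1]$, which is established in \cite{cai} and underpins both the Cauchy--Schwarz step and the initial estimate $|\bar{B}_{n,m}(f)-f|\leq \bar{B}_{n,m}(|f-f(x,y)|)$.
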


Now we investigate convergence of the sequence of linear positive
operators $\bar{B}_ {n,m}(f;x,y;\lambda)$ to  a function of
two variables which defined on weighted space.

Let $\rho(x,y)=x^{2}+y^{2}+1$  and $B_{\rho }$ be the space of all
functions defined on the real axis provided with  $|f(x,y)|\leq
M_{f}\rho (x,y),$ where $ M_{f} $ is a positive constant depending
only on $f$. 

\begin{theorem}
For each $f\in C^{0}_{\rho}$ and for all $(x; y) \in I$, we
have
\begin{eqnarray*}
\lim_{n \to \infty}\parallel \bar{B}_ {n,m}
(f;x,y;\lambda)-f(x,y)\parallel_{\rho}=0.
\end{eqnarray*}
\end{theorem}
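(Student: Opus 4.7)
The natural route is to invoke the bivariate weighted Korovkin-type theorem of Gadjiev for the space $C^{0}_{\rho}$ with weight $\rho(x,y)=1+x^{2}+y^{2}$. According to that theorem it suffices to establish the four limits
$$\lim_{n,m\to\infty}\bigl\|\bar{B}_{n,m}(F_{\nu};\cdot,\cdot;\lambda)-F_{\nu}\bigr\|_{\rho}=0,\qquad \nu=0,1,2,3,$$
for the test functions $F_{0}(s,t)=1$, $F_{1}(s,t)=s$, $F_{2}(s,t)=t$, $F_{3}(s,t)=s^{2}+t^{2}$. Each of these is precisely the kind of moment computed in Lemma \ref{lemx}, so my strategy is to turn the identities \eqref{b1}--\eqref{b5} into uniform bounds on $I$ and read off the weighted norm.

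For $\nu=0$, identity \eqref{b1} gives exact reproduction, so the norm is $0$. For $\nu=1,2$, the identities \eqref{b2} and \eqref{b3} combined with the trivial bound $|1-2x+x^{n+1}-(1-x)^{n+1}|\le 3$ on $[0,1]$ yield
$$\bigl|\bar{B}_{n,m}(s;x,y;\lambda)-x\bigr|\le \frac{3|\lambda|}{n(n-1)},\qquad \bigl|\bar{B}_{n,m}(t;x,y;\lambda)-y\bigr|\le \frac{3|\lambda|}{m(m-1)},$$
uniformly on $I$. Since $\rho\ge 1$, the corresponding $\rho$-norms are $O(1/n^{2})$ and $O(1/m^{2})$, hence vanish.

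The substantive step is $\nu=3$. Adding \eqref{b4} and \eqref{b5} gives
$$\bar{B}_{n,m}(s^{2}+t^{2};x,y;\lambda)-(x^{2}+y^{2})=\frac{x(1-x)}{n}+\frac{y(1-y)}{m}+\lambda\, R_{n,m}(x,y),$$
where $R_{n,m}$ gathers all the $\lambda$-corrections. On $I$ every summand of $R_{n,m}$ involves a factor like $x^{n+1}$, $(1-x)^{n+1}$, $y^{m+1}$, $(1-y)^{m+1}$ or a polynomial in $x,y$ all bounded by $1$, divided by $n(n-1)$, $m(m-1)$, $n^{2}(n-1)$ or $m^{2}(m-1)$. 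Hence $|R_{n,m}(x,y)|=O(1/n+1/m)$ uniformly on $I$, and since $\rho\ge 1$ the weighted norm of the whole expression is again $O(1/n+1/m)$.

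The only obstacle, and it is a mild one, is the careful accounting of the $\lambda$-correction terms in \eqref{b4}--\eqref{b5} to confirm that no factor of $x$ or $y$ spoils the uniform-in-$I$ domination by $\max(1/n,1/m)$; this is handled by the observation above that all the boundary exponentials are bounded by $1$ on $I$. With the three Korovkin limits in hand, the bivariate weighted Korovkin theorem immediately yields $\|\bar{B}_{n,m}(f;\cdot,\cdot;\lambda)-f\|_{\rho}\to 0$ for every $f\in C^{0}_{\rho}$.
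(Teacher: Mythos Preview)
Your argument is correct and is exactly the route the paper is set up for: the paper actually states this theorem without proof, but by citing Gadjiev \cite{gad1} immediately before it and by supplying the moment identities of Lemma~\ref{lemx}, it plainly intends precisely the weighted Korovkin verification you carry out on the test system $1,\,s,\,t,\,s^{2}+t^{2}$. One minor remark worth making explicit: because $I=[0,1]^{2}$ is compact, the weight satisfies $1\le\rho(x,y)\le 3$ on $I$, so $\|\cdot\|_{\rho}$ is equivalent to the sup norm there, and your uniform $O(1/n+1/m)$ bounds coming from \eqref{b1}--\eqref{b5} transfer directly to the weighted norm without any further work.
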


\end{document}